\newcommand{\MyScale}{1} 
\address{ }
\DeclareMathOperator{\arcosh}{arcosh}
\newcommand{\pd}{\partial}
\newcommand{\R}{\mathbb{R}}
\newcommand{\x}{\lVert x \rVert}
\newcommand{\tn}{\textrm{tn}}
\newcommand\sbullet[1][.5]{\mathbin{\vcenter{\hbox{\scalebox{#1}{$\bullet$}}}}}
\newcommand*{\ddt}[1]{%
\accentset{\scriptsize\sbullet}{#1}}
\theoremstyle{remark}
\newtheorem{theorem}{Theorem}[section]
\newtheorem{lemma}[theorem]{Lemma}
\newtheorem{proposition}[theorem]{Proposition}
\newtheorem{corollary}[theorem]{Corollary}
\title{Explicit fundamental gap estimates for some convex domains in $\mathbb H^2$ }
\author{Theodora Bourni}
\address[Theodora Bourni]{University of Tennessee}
\email{\href{mailto: tbourni@utk.edu}{\nolinkurl{tbourni@utk.edu}}}
\author{Julie Clutterbuck}
\address[Julie Clutterbuck]{Monash University}
\email{\href{mailto:Julie.Clutterbuck@monash.edu}{\nolinkurl{Julie.Clutterbuck@monash.edu}}}
\author{Xuan Hien Nguyen}
\address[Xuan Hien Nguyen]{Iowa State University}
\email{\href{mailto: xhnguyen@iastate.edu}{\nolinkurl{xhnguyen@iastate.edu}}}
\author{Alina Stancu}
\address[Alina Stancu]{Concordia University}
\email{\href{mailto: alina.stancu@concordia.ca}{\nolinkurl{alina.stancu@concordia.ca}}}
\author{Guofang Wei}
\address[Guofang Wei]{UC  Santa Barbara}
\email{\href{mailto: wei@math.ucsb.edu}{\nolinkurl{wei@math.ucsb.edu}}}
\author{Valentina-Mira Wheeler}
\address[Valentina-Mira Wheeler]{University of Wollongong}
\email{\href{mailto: vwheeler@uow.edu.au}{\nolinkurl{vwheeler@uow.edu.au}}}
\thanks{
The research of Julie Clutterbuck was supported by grant FT1301013 of the Australian Research Council.
The research of Xuan Hien Nguyen was supported by grant 579756 of the Simons Foundation.
The research of Alina Stancu was supported by NSERC Discovery Grant  RGPIN 327635.
 The research of Guofang Wei was supported by NSF Grant DMS 1811558.
The research of Valentina-Mira Wheeler was supported by grant DP180100431 and DE190100379 of the Australian Research Council.}
\begin{document}
\maketitle

\begin{abstract}
	Motivated by an example of Shih \cite{shih1989counterexample}, we compute the fundamental gap of a family of convex domains in the hyperbolic plane $\mathbb H^2$,  showing that there are convex domains for which $\lambda_2 - \lambda_1 <  \frac{3\pi^2}{D^2}$, where $D$ is the diameter of the domain and $\lambda_1$, $\lambda_2$ are the first and second Dirichlet eigenvalues of the Laplace operator on the domain. The result contrasts with the case of domains in $\mathbb R^n $ or $\mathbb S^n$, where $\lambda_2 - \lambda_1 \geq \frac{3 \pi^2}{D^2}$\cite{fundamental,seto2019sharp,he2017fundamental,dai2018fundamental}.  We also show that the fundamental gap of the domains in Shih's example 
is still greater than $\tfrac 32 \frac{\pi^2}{D^2}$, even though the first eigenfunction of the Laplace operator is not log-concave.

\end{abstract}

\section{Introduction}
We consider the Laplace operator $- \Delta$ with Dirichlet boundary conditions on a compact domain $\Omega$ of $\mathbb H^2$. This operator has a discrete spectrum with $\infty$ as its accumulation point. If we list the sequence of eigenvalues in increasing order $\lambda_1 < \lambda_2 \leq \lambda_3 \leq \cdots$, the \emph{fundamental gap} is the difference between the first two eigenvalues
  \[
     \lambda_2 - \lambda_1>0.
  \]
  This spectral gap plays an important role in both mathematics and physics. For example, in quantum mechanics, it characterizes the energy difference between the ground state and the first excited state.

  Finding  a sharp lower bound for the fundamental gap of convex domains in $\R^n$ is a difficult problem with a long and rich history (see e.g. the recent survey article \cite{daifundamental}).
  One notable development was the estimate $\lambda_2 - \lambda_1 \geq \frac{\pi^2}{D^2}$ \cite{swyy-gap-estimate, yu-zhong-gap},
   where $D$ is the diameter of the domain, defined by
  \[
    D = \sup_{p, q \in \Omega} \| p-q\|. 
  \]  A key step in their proof was the fact that the first eigenfunction $u_1$ is log-concave (i.e. $\log u_1$ is concave), first proved by  Brascamp and Lieb \cite{MR0450480}.
  
  
  It was known that the estimate was not sharp:  the optimal gap was conjectured to be that obtained on an interval, with the saturated case happening as the domains degenerate to a one-dimensional strip. Finally, in 2011, the fundamental gap conjecture was resolved in  \cite{fundamental} by Andrews and Clutterbuck:  on a convex domain in $\mathbb R^n$ with Dirichlet boundary condition, $\lambda_2 - \lambda_1 \ge 3\pi^2/D^2$, where $D$ is the diameter of the domain. They used a new double-point technique in the proof. 
    
  Recently, Dai, He, Seto, Wang, and Wei (in various subsets) \cite{seto2019sharp,dai2018fundamental,he2017fundamental} generalized  the fundamental gap estimate to convex domains in $\mathbb S^n$, showing that $\lambda_2 - \lambda_1 \ge 3\pi^2/D^2$.     
    
  In both these settings, the log-concavity of the first eigenfunction plays an important role.   While mere log-concavity is sufficient to obtain the coarse estimate $\lambda_2-\lambda_1\ge \frac{\pi^2}{D^2}$, in order to obtain the optimal estimates in  \cite{fundamental, seto2019sharp,dai2018fundamental,he2017fundamental} it is shown that the first eigenfunction is \emph{super log-concave},  namely that the first eigenfunction is more log-concave than the first eigenfunction of the following one-dimensional model operator,
\begin{equation}
L_{n,K,D} (\phi) =  \phi''-(n-1)\tn_K(s)\phi'  \label{model}
\end{equation}
on $[-\frac{D}{2},\frac{D}{2}]$ with Dirichlet boundary condition. Here 
	\begin{equation*}
\tn_K(s) =
\begin{cases}
\sqrt{K}\tan(\sqrt{K}s), & K > 0 \\
0, & K=0 \\
-\sqrt{-K}\tanh(\sqrt{-K}s) & K <0
\end{cases}
\end{equation*}
where $K=0$ is the model for $\mathbb R^n$ and $K=1$ is the model for $\mathbb S^n$. 

Surprisingly, $K=-1$ is not a good model for $\mathbb H^n$. 
  Actually, the first eigenfunction of \eqref{model} when $K=-1$ is still log-concave. Indeed, from \cite[(2.16)]{seto2019sharp} we know that $(\log (\bar{\phi}_1))'' (0) = - \bar{\lambda}_1 < 0$ for all $K$, where $\bar{\phi}_1$ and $\bar{\lambda}_1$ are the (positive) first eigenfunction and the first eigenvalue of \eqref{model}. 
However, Shih  proved the existence of convex domains in $\mathbb H^2$ such that the first eigenfunction is not log-concave \cite{shih1989counterexample}. Therefore comparison to $\bar{\phi}_1$ with $K=-1$ will not work 
in the hyperbolic case. Very little is known for the fundamental gap lower bound estimate for $\mathbb H^n$ and in fact, one expects $3 \pi^2/D^2$ not to be a lower bound. 

In this paper, we estimate the fundamental gap and the diameter of a family of convex domains in the hyperbolic plane and confirm this intuition.  
  \begin{theorem} \label{main theorem}
	There are convex domains in  $\mathbb H^2$ such that 
	\begin{equation}
	\label{eq:SmallerFG}
	  \lambda_2 - \lambda_1 < 3\pi^2/D^2,
	\end{equation}
	where $D$ is the diameter of the domain. 
  \end{theorem}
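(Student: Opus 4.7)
The plan is to exhibit an explicit family of convex domains in $\mathbb{H}^2$ on which separation of variables applies, expand the fundamental gap and the diameter asymptotically, and read off that the gap falls below $3\pi^2/D^2$ for an appropriate choice of parameters. Fix a complete geodesic $\gamma\subset\mathbb{H}^2$ and use Fermi coordinates $(x,y)$ based on $\gamma$, in which the metric becomes $ds^2=\cosh^2(y)\,dx^2+dy^2$ and the Laplacian reads $\Delta u=\cosh^{-2}(y)u_{xx}+u_{yy}+\tanh(y)u_y$. Consider the coordinate rectangles $\Omega_{a,b}=\{(x,y):|x|\le a,\,|y|\le b\}$. The sides $x=\pm a$ are geodesics and $y=\pm b$ are equidistant curves of $\gamma$; both bound convex subsets of $\mathbb{H}^2$, so each $\Omega_{a,b}$ is convex. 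The hyperbolic distance formula in Fermi coordinates yields the diameter in closed form,
\[
D_{a,b}=2\arcosh(\cosh a\,\cosh b),
\]
realized between opposite corners.

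The ansatz $u(x,y)=f(x)g(y)$ with $f''=-\alpha^2 f$ is consistent with the eigenvalue equation, and the Dirichlet condition on $x=\pm a$ forces $\alpha=\alpha_m:=m\pi/(2a)$ for $m\ge 1$. For each such $\alpha$, the factor $g$ satisfies the regular Sturm-Liouville problem
\[
-g''-\tanh(y)g'+\frac{\alpha^2}{\cosh^2 y}\,g=\mu(\alpha)\,g,\qquad g(\pm b)=0,
\]
self-adjoint in $L^2(\cosh y\,dy)$. The 2D Dirichlet spectrum of $\Omega_{a,b}$ is then $\{\mu_k(\alpha_m)\}_{k,m\ge 1}$, and by monotonicity of $\mu_k(\alpha)$ in both indices together with the usual nodal argument, $\lambda_1=\mu_1(\alpha_1)$ and $\lambda_2=\min\{\mu_1(\alpha_2),\,\mu_2(\alpha_1)\}$.

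Fix $b>0$ and let $a\to\infty$, so that $\alpha_m\to 0$. Regular first-order perturbation theory gives
\[
\mu_1(\alpha)=\bar\lambda_1(b)+\alpha^2\,C(b)+O(\alpha^4),\qquad C(b)=\frac{\int_{-b}^{b}\bar g_1^{\,2}/\cosh y\,dy}{\int_{-b}^{b}\bar g_1^{\,2}\cosh y\,dy},
\]
where $(\bar\lambda_1(b),\bar g_1)$ is the ground state of the $\alpha=0$ problem; crucially, $C(b)<1$ strictly because $\cosh y>1/\cosh y$ for $y\ne 0$. Simultaneously $\mu_2(\alpha_1)\to\bar\lambda_2(b)>\bar\lambda_1(b)$, so for $a$ large the $x$-antisymmetric mode is the second eigenvalue and
\[
\lambda_2-\lambda_1=\frac{3\pi^2}{4a^2}\,C(b)+O(a^{-4}).
\]
On the other hand $D_{a,b}=2a+2\log\cosh b+o(1)$, so $3\pi^2/D_{a,b}^2=\tfrac{3\pi^2}{4a^2}+O(a^{-3})$; since $C(b)<1$, one obtains $\lambda_2-\lambda_1<3\pi^2/D^2$ for every fixed $b>0$ and all sufficiently large $a$.

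The main obstacle is to make the perturbation expansion quantitative, with a remainder uniformly small on the scale $\alpha\sim 1/a$, and to verify that no other mode $\mu_k(\alpha_m)$ overtakes $\mu_1(\alpha_2)$ as the second eigenvalue for large $a$. Both are standard for regular Sturm-Liouville systems, but the precise matching of the perturbative expansion of the gap with the closed-form expansion of $D_{a,b}$ is what upgrades the pointwise inequality $C(b)<1$ to the strict eigenvalue inequality $\lambda_2-\lambda_1<3\pi^2/D^2$.
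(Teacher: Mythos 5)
Your proposal is correct in substance and reaches the same target estimate, but it travels a genuinely different route from the paper. A coordinate change reveals that the two constructions use essentially the same family of domains: in the upper half-plane model with polar coordinates $(r,\theta)$, the base geodesic for your Fermi coordinates is the ray $\theta=\pi/2$, the Fermi arclength is $x=\log r$, and the normal coordinate $y$ is a monotone reparametrization of $\theta$ with $\cosh y = \csc\theta$. Under this identification your $\Omega_{a,b}$ is exactly the paper's $\Omega_{c,\theta_0,\pi-\theta_0}$ with $c=\pi/(2a)$, up to the isometry $r\mapsto e^{a}r$. The key scalar is also literally the same: your $C(b)=\int \bar g_1^2\cosh^{-1}y\,dy / \int \bar g_1^2\cosh y\,dy$ becomes, after the change of variables, the ratio $\int h^2\,d\theta / \int (\csc^2\theta)h^2\,d\theta$ that the paper bounds away from $1$ in Proposition~\ref{prop:quotient-estimate}. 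What differs is the mechanism by which $C(b)<1$ is converted into the gap estimate. You expand $\mu_1(\alpha)$ to first order in $\alpha^2$ and control the remainder by analytic perturbation theory; the paper instead deforms the parameter $\mu(t)=c^2+3c^2t$ continuously, derives the exact Hadamard-type identity $\dot\lambda = 3c^2\int (h^t)^2\,d\theta / \int (\csc^2\theta)(h^t)^2\,d\theta$, and bounds that ratio uniformly in $t$ by a Sturm comparison with explicit sine barriers. The paper's route pays a small price in bookkeeping but avoids any appeal to perturbative remainder bounds, producing a clean $\lambda_2-\lambda_1\le 3c^2(1-\delta)$ valid for all admissible $c$, not just asymptotically.

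Two places in your write-up deserve more care. First, you assert that the diameter $D_{a,b}=2\arcosh(\cosh a\cosh b)$ is realized between opposite corners; this is true, but it is not automatic and the paper devotes Proposition~\ref{prop:Diameter} to a case analysis ruling out interior points and smooth boundary points. The argument there (midpoint/convexity on the geodesic sides, Gauss Lemma on the equidistant sides) transfers verbatim to your Fermi rectangle. Second, and more substantively, the statement that the remainder in $\mu_1(\alpha)=\bar\lambda_1+C(b)\alpha^2+O(\alpha^4)$ is uniform enough, and that $\mu_1(\alpha_2)<\mu_2(\alpha_1)$ for large $a$, is correct but should be justified rather than deferred as ``standard'': analyticity of the simple eigenvalue $\mu_1$ in $\alpha^2$ gives the remainder bound for fixed $b$, and $\mu_2(\alpha_1)\to\bar\lambda_2(b)>\bar\lambda_1(b)$ while $\mu_1(\alpha_m)\to\bar\lambda_1(b)$ gives the mode ordering. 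The paper bypasses the first issue entirely via the integral identity, and handles the second explicitly through the sufficient condition \eqref{eq:Bound-c} comparing $\lambda_2^{c^2}$ and $\lambda_1^{4c^2}$.
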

  
  Our construction is motivated by Shih's example, and the above domains have very large diameter. For domains with small diameters, we conjecture that $3 \pi^2/D^2$ still works as a lower bound. In fact, we have a family of domains with gap greater than $3 \pi^2/D^2$ when the diameter is close to, but less than, $1$. See the beginning of Section \ref{gap-section}. 

In the domains for which \eqref{eq:SmallerFG} is true,  one can ask in addition whether $ \lambda_2 - \lambda_1 <\pi^2/D^2$ since Shih proved that the first eigenfunction is not log-concave for some of them. We show that the inequality does not hold and that the fundamental gaps of these examples are in fact greater than $3\pi^2/(2D^2)$, see the end of Section \ref{gap-section}. This illustrates that log-concavity of the first eigenfunction is not a necessary condition for the fundamental gap to be greater or equal to $\pi^2/D^2$. 

To the best of the authors' knowledge, the examples above give the first explicit fundamental gap estimates of the Dirichlet Laplacian for domains in the hyperbolic spaces in terms of the diameter. In \cite{Benguria-Linde2007}, an excellent upper bound for the Dirichlet gap was obtained in terms of the gap  of geodesic balls whose size was determined by the first eigenvalue of the domain in $\mathbb H^n$.

The organization of this paper is as follows:   In Section \ref{domain-section}, we set up the domain and describe how the eigenfunctions are found via  separation of variables, and identify the first two eigenvalues.  In Section \ref{first-eigenvalue-section}, 
we give some rough estimates for  the first two eigenvalues and the gap.  In Section \ref{diameter-section}, we estimate the diameter of the domains.   Finally, in Section \ref{gap-section}, we improve the estimate of the gap, thus proving Theorem \ref{main theorem}.

\subsection*{Acknowledgements} This research originated at  the  workshop ``Women in Geometry 2" at the  Casa Matem\'atica Oaxaca (CMO)  from June 23 to June 28, 2019.  We would like to thank CMO-BIRS for creating the opportunity to start work on this problem through their support of the workshop.

\section{The domains and their first two eigenvalues} \label{domain-section}

Let $\mathbb H^2$ be the hyperbolic space modelled by the Poincar\'e half-plane $\{ (x, y) \mid y >0\} = \{ (r, \theta) \mid r>0, \theta \in (0,\pi)\}$ with the metric
  \begin{equation}  \label{metric}
  g= ds^2 = \frac{ dx^2 + dy ^2}{y^2} = \frac{dr^2}{r^2 \sin^2 \theta} + \frac{d\theta^2}{\sin^2 \theta}.
  \end{equation}

In the orthonormal frame 
  \[
  e_1 = r \sin \theta \frac{\pd}{\pd r}, \quad e_2 = \sin \theta \frac{\pd}{\pd \theta},
  \]
	the non-vanishing Christoffel symbols are 
$	
	  \Gamma_{11}^2 = - \Gamma_{12}^1 = - \Gamma_{21}^1= \cos \theta.
	$
	With this information, it is straighforward to compute the covariant derivatives for any function $v$ and find 
  \begin{align}
  v_{11} &= r^2 \sin^2 \theta\ v_{rr} + r \sin^2 \theta\ v_r - \sin\theta \cos \theta\ v_{\theta},  \nonumber \\
  v_{22} & = \sin^2 \theta\ v_{\theta\theta} + \sin \theta \cos\theta\ v_{\theta}, \nonumber \\
  \Delta v & = v_{11} + v_{22} = r^2 \sin^2 \theta\ v_{rr} + \sin^2\theta\ v_{\theta\theta} + r \sin^2 \theta\ v_r.  \label{Laplace}
  \end{align}

  \subsection{The domains}
We consider the family of  domains 
  \[
  \Omega_{c,\theta_0, \theta_1} = \{(r,\theta) \mid 1 < r< e^{\pi/c},\  \theta_0< \theta <\theta_1\}, 
  \]
  where $c>0$, $\theta_0 \in (0, \tfrac{\pi}{2})$, and $\theta_1 \in (\tfrac{\pi}{2}, \pi)$ (see Figure \ref{fig:OmegaDomain}).  In these coordinates, geodesics are either vertical lines $x=c$ or half-circles  centred on the $x$-axis, so the sets $ \Omega_{c,\theta_0, \theta_1}$ are convex domains in $\mathbb H^2$.
	
	\begin{figure}[htbp]
    \begin{tikzpicture}[scale=\MyScale]
\draw[thick,->] (-4,0)--(4,0); 
\draw[thick,->] (0,0)--(0,5); 
\draw [blue, thick] ( {cos(45)}, {sin(45)})--({4.5*cos(45)},{4.5*sin(45)}); 
\draw [blue, thick] ( {cos(120)}, {sin(120)})--({4.5*cos(120)},{4.5*sin(120)}); 
\draw [blue,thick,domain=45:120] plot ({cos(\x)}, {sin(\x)});
\draw [blue,thick,domain=45:120] plot ({4.5*cos(\x)}, {4.5*sin(\x)});
\node[anchor=center] at (2.3,0.9) {\(\theta_0\)};
\node[anchor=center] at (3.15,0.9) {\(\theta_1\)};
\draw[densely dashed, thick, ->] (2.2,0) arc (0:45:2.2); 
\draw[densely dashed, thick, ->] (3,0) arc (0:120:3); 
\node[blue, anchor=center] at (1,3.5) {$\Omega_{c, \theta_0, \theta_1}$};
\node[anchor=west, blue] at ({.6*cos(45)}, {.6*sin(45)}) {$P$};
\node[anchor=west, blue] at ({5*cos(45)},{5*sin(45)}) {$Q$};
\node[anchor=east, blue] at ({5*cos(120)},{5*sin(120)}) {$R$};
\node[anchor=east, blue] at ({.6*cos(120)},{.6*sin(120)}) {$S$};
\node[anchor=west] at (0, 1.2) {$(0,1)$};
\node[anchor=west] at (0, 5) {$(0,e^{\pi/c})$};
    \end{tikzpicture}%
  \caption{Domain $\Omega_{c, \theta_0, \theta_1}= \{(r,\theta) \mid 1 < r< e^{\pi/c},\  \theta_0< \theta <\theta_1\} 
$.}
\label{fig:OmegaDomain}
\end{figure}
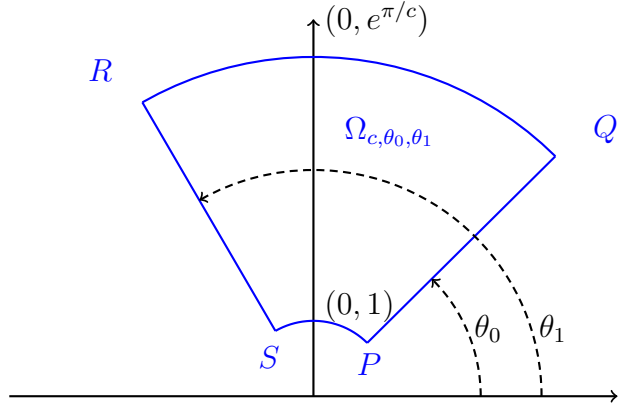

  \subsection{The separation of variables}
Because the metric $g$ from (\ref{metric}) is a warped product, we can use separation of variables for the eigenfunctions (see e.g. \cite[page 41]{Chavel}) and write
  \[
  u (r,\theta) = f(r)\, h (\theta). 
  \]

  We have, from our formulas for the Laplace operator (\ref{Laplace}), that $\Delta u = -\lambda u$ gives 
   	\begin{align*}
   	r^2 \sin^2 \theta \ f_{rr} \, h + r \sin^2 \theta \ f_r\, h + \sin^2 \theta \ f\, h_{\theta\theta} & = - \lambda f h,\\
   	\left(r^2 \frac{f_{rr}}{f} + r \frac{f_r}{f}\right) + \left(\frac{h_{\theta\theta}}{h}+ \lambda \csc^2 \theta \right)& =0.
   	\end{align*}   	
  We are looking for eigenfunctions with vanishing Dirichlet conditions on the boundary, hence we should solve the two eigenvalue equations
\begin{align}
\label{eq:f-equation}
r^2 f_{rr} + r f_r & = -\mu f, \ \ r \in [1, e^{\pi/c}],\\
\label{eq:h-equation}
h_{\theta\theta} + \lambda \csc^2 \theta \, h & =  \mu h, \ \ \theta \in [\theta_0, \theta_1], 
\end{align}
both with Dirichlet boundary conditions. With the change of variable $t = \log r$, equation (\ref{eq:f-equation}) becomes 
\begin{equation}
f_{tt} = -\mu f, \ \ t \in [0, \tfrac{\pi}{c}]. \label{eq:f-equation 2} \end{equation} In order for this to satisfy the boundary conditions, $\mu$ must be positive, so we set $\mu = (k c)^2, f(t) = \sin (k c t)$, where $k$ are nonzero integers. 

\subsection{The identification of the first two eigenvalues}
  The first Dirichlet  eigenvalue $\lambda_1$ of $\Delta u = -\lambda u$ on $\Omega_{c,\theta_0, \theta_1}$ 
corresponds to a strictly positive eigenfunction, which implies that $f$ in \eqref{eq:f-equation 2} is $\sin(ct)$, and so $\mu=c^2$.    Hence, $\lambda_1$ 
is given by the value $\lambda$ solving 
\begin{equation}  \begin{split}
h_{\theta\theta} + \lambda \csc^2 \theta \, h  =  c^2 h, \ \ \theta \in [\theta_0, \theta_1], \\
h(\theta_0)=h(\theta_1)=0,
\end{split}
 \label{h-first}
\end{equation}
for $h>0$.  We denote by  $\lambda^{c^2}_1$ the smallest $\lambda$ solving the above equation, so we have $\lambda_1=\lambda^{c^2}_1$. 

The second eigenvalue $\lambda_2$ corresponds to a sign-changing eigenfunction:  either $f$ or $h$ changes sign.   If $f$ changes sign, then $f$ in \eqref{eq:f-equation 2} is given by $\sin(2ct)$ and $\mu=4c^2$;   in this case $\lambda_2$ is given by $\lambda^{4c^2}_1$ solving 
\begin{equation}
h_{\theta\theta} + \lambda \csc^2 \theta \, h  =  4c^2 h, \ \ \theta \in [\theta_0, \theta_1],  \label{h-second}
\end{equation}
with $h>0$ and Dirichlet boundary conditions.   Otherwise $h$ changes sign, $f$ is positive and is given by $\sin(ct)$ with $\mu=c^2$;  then $\lambda_2$ is given by $\lambda^{c^2}_2$ solving \eqref{h-first} with $h$ changing sign exactly once.   

Thus, the second eigenvalue is   $\lambda_2=\min\lbrace\lambda^{4c^2}_1,\lambda^{c^2}_2\rbrace$.


\section{Estimates on the first and second eigenvalues} \label{first-eigenvalue-section}   \label{second-eigenvalue-section}
In this section, we give some rough estimates for  the first two eigenvalues and the fundamental gap. 

We define a convenient angle to simplify the exposition, so  let
    \label{def:ThetaAst}
    $$\theta_{\ast} := \min (\theta_0, \pi - \theta_1).$$
Note that $1 \le \csc^2 \theta \le\csc^2(\theta_{\ast})$, for all $\theta \in [\theta_0, \theta_1]$. 

We thus have the following estimate on the first eigenvalue of (\ref{eq:h-equation}).
\begin{lemma}  \label{first-eigenvalue-est}
  The first eigenvalue of (\ref{eq:h-equation}), denoted by $\lambda_1^\mu$, satisfies
	\begin{equation}
	  \label{eq:lambda1-estimate}
	  \sin^2(\theta_{\ast}) \left( \mu + \frac{\pi^2}{(\theta_1 - \theta_0)^2} \right) \le 	\lambda_1^\mu \le  \mu + \frac{\pi^2}{(\theta_1 - \theta_0)^2}. 
	\end{equation}
\end{lemma}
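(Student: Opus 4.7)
The plan is to recognize \eqref{eq:h-equation} as a weighted Sturm--Liouville problem and exploit its Rayleigh quotient. Multiplying the eigenvalue equation $-h_{\theta\theta} + \mu h = \lambda \csc^2\theta\, h$ by $h$ and integrating by parts over $[\theta_0,\theta_1]$ (using Dirichlet data), we obtain the variational characterization
\[
\lambda_1^\mu = \inf_{\substack{h\in H^1_0([\theta_0,\theta_1]) \\ h \not\equiv 0}} \frac{\int_{\theta_0}^{\theta_1} \bigl(h_\theta^2 + \mu h^2\bigr)\,d\theta}{\int_{\theta_0}^{\theta_1} \csc^2\theta\, h^2\,d\theta}.
\]
Both bounds in \eqref{eq:lambda1-estimate} will now follow from sandwiching the weight $\csc^2\theta$ between $1$ and $\csc^2(\theta_\ast)$ on $[\theta_0,\theta_1]$, as noted just before the lemma.

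For the upper bound, we take as a test function the first eigenfunction of $-\partial_\theta^2$ on $[\theta_0,\theta_1]$ with Dirichlet data, namely $\varphi(\theta) = \sin\!\bigl(\pi(\theta-\theta_0)/(\theta_1-\theta_0)\bigr)$, which satisfies $\int \varphi_\theta^2 = \tfrac{\pi^2}{(\theta_1-\theta_0)^2}\int \varphi^2$. Using $\csc^2\theta \ge 1$ to bound the denominator of the Rayleigh quotient from below by $\int \varphi^2$, we get
\[
\lambda_1^\mu \le \frac{\int(\varphi_\theta^2 + \mu\varphi^2)\,d\theta}{\int \varphi^2\,d\theta} = \mu + \frac{\pi^2}{(\theta_1-\theta_0)^2}.
\]

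For the lower bound, let $h_1>0$ be the first eigenfunction. Using $\csc^2\theta \le \csc^2(\theta_\ast)$ to bound the denominator from above and the standard Poincar\'e inequality $\int h_1^2 \le \tfrac{(\theta_1-\theta_0)^2}{\pi^2}\int h_{1,\theta}^2$, we compute
\[
\lambda_1^\mu = \frac{\int(h_{1,\theta}^2 + \mu h_1^2)\,d\theta}{\int \csc^2\theta\, h_1^2\,d\theta} \ge \sin^2(\theta_\ast)\,\frac{\int(h_{1,\theta}^2 + \mu h_1^2)\,d\theta}{\int h_1^2\,d\theta} \ge \sin^2(\theta_\ast)\left(\mu + \frac{\pi^2}{(\theta_1-\theta_0)^2}\right).
\]
No single step is a substantial obstacle; the only care needed is in correctly deriving the Rayleigh quotient (the weight $\csc^2\theta$ appears on the eigenvalue side of the equation, so it multiplies $h^2$ in the denominator, not the numerator).
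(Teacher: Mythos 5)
Your proof is correct and follows essentially the same route as the paper: derive the Rayleigh quotient $\lambda_1^\mu = \inf \int(h_\theta^2+\mu h^2)/\int \csc^2\theta\, h^2$, use $\csc^2\theta\ge 1$ with the test function $\sin(\pi(\theta-\theta_0)/(\theta_1-\theta_0))$ for the upper bound, and use $\csc^2\theta\le\csc^2\theta_\ast$ together with the Wirtinger/Poincar\'e inequality for the lower bound. The paper also records an alternative proof via Sturm comparison in the appendix, but your argument reproduces the main-text variational proof.
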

  \begin{proof}
Let $h$ be a solution of (\ref{eq:h-equation}). We multiply both sides of the equation by $h$, and integrate from $\theta_0$ to $\theta_1$, to obtain 
	\begin{eqnarray*}
	  \lambda & = & \frac{\int_{\theta_0}^{\theta_1} (|h_\theta|^2 + \mu {h}^2) d\theta}{\int_{\theta_0}^{\theta_1}(\csc^2 \theta) h^2 d\theta }  \\
	  & > &  \frac{1}{\csc^2(\theta_{\ast})} \left( \mu + \frac{\int_{\theta_0}^{\theta_1} |h_\theta|^2 d\theta}{\int_{\theta_0}^{\theta_1} h^2 d\theta} \right) \ge \sin^2(\theta_{\ast}) \left( \mu + \frac{\pi^2}{(\theta_1 - \theta_0)^2} \right),
\end{eqnarray*}
where in the last step we use Wirtinger's inequality $\int_0^D (h')^2\,dx\ge (\pi/D)^2 \int_0^D h^2\,dx$.  

To estimate the first eigenvalue from above, we choose the test function $\varphi = \sin \left( \tfrac{\theta -\theta_0}{\theta_1 -\theta_0} \pi \right)$ and recall that the first eigenvalue minimizes the Rayleigh quotient. Using $\csc^2 \theta \ge 1$,  we have the bound from above 
\[ \lambda_1^\mu \le 
 \mu + \frac{\pi^2}{(\theta_1 - \theta_0)^2}.
\qedhere
\] 	\end{proof}
	
	An alternate proof of  \eqref{eq:lambda1-estimate} using Sturm's comparison theorem can be found in the appendix.



\begin{lemma}\label{lemma on sign-changing h}
  We have the following estimate for $\lambda_2^{\mu}$, the second eigenvalue of \eqref{eq:h-equation}:
  \begin{equation}
    \label{eq:Second-evalue}
    \sin^2 \theta_{\ast}\left(\mu+ \frac{4\pi^2}{(\theta_1-\theta_0)^2}\right)\le \lambda_2^{\mu} \le \mu+ \frac{4\pi^2}{(\theta_1-\theta_0)^2}
  \end{equation}
\end{lemma}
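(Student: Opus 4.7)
The plan is to mimic the structure of Lemma~\ref{first-eigenvalue-est}, using the two-sided bound $\sin^2\theta \le 1$ on the weight $\csc^2\theta$, combined with the fact that the second Dirichlet eigenvalue $\lambda_2^\mu$ admits a variational characterization through a 2-dimensional test space (Courant--Fischer), and also corresponds to an eigenfunction with exactly one interior node.

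For the lower bound, my plan is to exploit Sturm--Liouville nodal theory: the second eigenfunction $h_2$ of \eqref{eq:h-equation} has exactly one zero $\alpha\in(\theta_0,\theta_1)$. Restricting $h_2$ to the shorter of the two subintervals $[\theta_0,\alpha]$ and $[\alpha,\theta_1]$, we obtain a positive first-eigenfunction on that subinterval for the same equation with the same eigenvalue $\lambda_2^\mu$. Since $\csc^2\theta\le \csc^2\theta_\ast$ still holds on the subinterval, applying Lemma~\ref{first-eigenvalue-est} to this shorter piece (of length at most $(\theta_1-\theta_0)/2$) gives
\[
\lambda_2^\mu \;\ge\; \sin^2\theta_\ast\!\left(\mu+\frac{\pi^2}{((\theta_1-\theta_0)/2)^2}\right)
\;=\; \sin^2\theta_\ast\!\left(\mu+\frac{4\pi^2}{(\theta_1-\theta_0)^2}\right).
\]

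For the upper bound, I would use Rayleigh--Ritz with the 2-dimensional test space
$V_0 := \operatorname{span}\{\psi_1,\psi_2\}$ where $\psi_k(\theta)=\sin\!\big(\tfrac{k\pi(\theta-\theta_0)}{\theta_1-\theta_0}\big)$. These functions satisfy Dirichlet conditions, are $L^2$-orthogonal on $[\theta_0,\theta_1]$, and satisfy $-\psi_k''=(k\pi/(\theta_1-\theta_0))^2\psi_k$. Using $\csc^2\theta\ge 1$ in the denominator of the Rayleigh quotient and maximizing the (then unweighted) quotient over $V_0$ yields the second Dirichlet eigenvalue of $-\partial_\theta^2+\mu$ on $[\theta_0,\theta_1]$, namely $\mu+4\pi^2/(\theta_1-\theta_0)^2$, as the maximum, and Courant--Fischer gives the desired bound $\lambda_2^\mu\le \mu+4\pi^2/(\theta_1-\theta_0)^2$.

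I do not anticipate any significant obstacle: the argument is essentially the same two-sided weight sandwich used for $\lambda_1^\mu$, the only new ingredient being that one must either pass to a subinterval via the single-node property of $h_2$ (for the lower bound) or use a 2-dimensional trial space orthogonal in the unweighted $L^2$ inner product (for the upper bound). The most delicate point is simply to observe that working on the shorter subinterval $[\theta_0,\alpha]$ or $[\alpha,\theta_1]$ does not worsen the constant $\sin^2\theta_\ast$, since $\csc^2\theta\le\csc^2\theta_\ast$ continues to hold on any subinterval of $[\theta_0,\theta_1]$.
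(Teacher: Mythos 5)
Your lower bound is exactly the paper's argument: pass to the shorter nodal subinterval of the second eigenfunction and apply the lower bound of Lemma~\ref{first-eigenvalue-est}. For the upper bound, the paper stays within the same framework: $\lambda_2^\mu$ is also the first eigenvalue on the \emph{longer} nodal subinterval, which has length $\ell\ge(\theta_1-\theta_0)/2$, so the upper bound of Lemma~\ref{first-eigenvalue-est} gives $\lambda_2^\mu\le \mu+\pi^2/\ell^2\le \mu+4\pi^2/(\theta_1-\theta_0)^2$ in one line. You instead invoke Courant--Fischer for the weighted problem with the $2$-dimensional trial space $\operatorname{span}\{\psi_1,\psi_2\}$, use $\csc^2\theta\ge 1$ in the denominator, and then use that $\psi_1,\psi_2$ are orthogonal in both the $L^2$ and $H^1_0$ inner products to evaluate the max over the span. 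Both arguments are correct; the paper's route is slightly more economical because the single nodal decomposition (plus Lemma~\ref{first-eigenvalue-est}) delivers both inequalities at once, whereas yours brings in the generalized min--max principle and an orthogonality computation that, while standard, must be checked (derivative orthogonality as well as $L^2$-orthogonality). Your version has the mild advantage of not needing to locate or discuss the nodal point for the upper bound.
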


\begin{proof}
  Let $h_2^\mu$ be an eigenfunction corresponding to the second eigenvalue $\lambda_{2}^{\mu}$ of (\ref{eq:h-equation}). Then there is a unique $\theta_2 \in (\theta_0, \theta_1)$ such that $h_2^\mu (\theta_2) = 0$. The eigenvalue $\lambda_2^\mu$ is the same as the first eigenvalue of $h_{\theta\theta} + \lambda \csc^2 \theta \, h  =  \mu h$ with zero Dirichlet boundary condition on either $[\theta_0, \theta_2]$ or $[\theta_2, \theta_1]$. Taking the interval with smaller length and applying Lemma~\ref{first-eigenvalue-est}, we get 
  \begin{equation*}
    \sin^2 \theta_{\ast} \left( \mu + \frac{4\pi^2}{(\theta_1 - \theta_0)^2} \right) \leq \lambda_2^{\mu}. 
  \end{equation*}
  For the upper bound, we apply Lemma~\ref{first-eigenvalue-est} with the longer interval. 
\end{proof}

Combining (\ref{eq:Second-evalue}) and Lemma~\ref{first-eigenvalue-est}, we have  $\lambda_2^{c^2} \ge \lambda_1^{4c^2}$ when 
  \begin{equation}
    \label{eq:Bound-c}
    \frac{\pi^2}{(\theta_1-\theta_0)^2} \frac{ (4 \sin^2 \theta_{\ast} -1)}{(4 - \sin^2 \theta_{\ast})} \geq c^2, \ \ \  \theta_{\ast} > \tfrac{\pi}{6}.
  \end{equation}
  
 Except Section~\ref{diameter-section},  in the rest of this article, we assume that $c>0$ satisfies \eqref{eq:Bound-c}, thereby the second Dirichlet eigenvalue of the Laplacian on $\Omega_{c, \theta_0, \theta_1}$ is $\lambda_1^{4c^2}$.  Geometrically, this corresponds to a domain, as shown in  Figure \ref{fig:OmegaDomain}, in which the opening angle is small in comparison to the vertical length.  
	
  \subsection{Rough estimate of the fundamental gap} 
  \begin{lemma}
    \label{lemma on second eigenvalue}
    Assume that $c,\ \theta_{\ast}$ satisfies \eqref{eq:Bound-c}.   Then the fundamental gap of $\Omega_{c,\theta_0, \theta_1}$ satisfies
  \begin{equation} 
    \label{eq:RoughFudamentalGap}
    3 \sin^2 \theta_{\ast} c^2< \lambda_2-\lambda_1 < 3c^2.
  \end{equation}
 Hence,  as $\theta_{\ast}$ approaches $\tfrac{\pi}{2}$, the gap approaches $3c^2$.
\end{lemma}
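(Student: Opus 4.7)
The plan is to reduce everything to the one-parameter family of ODE eigenvalue problems \eqref{eq:h-equation} and use its variational characterization. Under hypothesis \eqref{eq:Bound-c} we have $\lambda_1 = \lambda_1^{c^2}$ and $\lambda_2 = \lambda_1^{4c^2}$, so the quantity to control is $\lambda_1^{4c^2} - \lambda_1^{c^2}$. Writing $h_1$ and $h_2$ for the positive first eigenfunctions of \eqref{eq:h-equation} at $\mu = c^2$ and $\mu = 4c^2$ respectively, we have the Rayleigh quotient
\[
\lambda_1^\mu \;=\; \min_{\varphi} \frac{\int_{\theta_0}^{\theta_1}\bigl(|\varphi_\theta|^2 + \mu\,\varphi^2\bigr)\,d\theta}{\int_{\theta_0}^{\theta_1}\csc^2\theta\,\varphi^2\,d\theta},
\]
where the minimum is over functions vanishing at $\theta_0,\theta_1$, and this minimum is attained by the corresponding first eigenfunction.

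For the upper bound I would use $h_1$ as a test function in the Rayleigh quotient for $\lambda_1^{4c^2}$. Since $h_1$ realizes the quotient at $\mu=c^2$, this gives
\[
\lambda_1^{4c^2} \;\le\; \lambda_1^{c^2} \;+\; 3c^2\,\frac{\int_{\theta_0}^{\theta_1} h_1^2\,d\theta}{\int_{\theta_0}^{\theta_1} \csc^2\theta\, h_1^2\,d\theta}.
\]
Because $\csc^2\theta \ge 1$ on $[\theta_0,\theta_1]$, the fraction is at most $1$, yielding $\lambda_2 - \lambda_1 \le 3c^2$. For the lower bound I would symmetrically use $h_2$ as a test function in the Rayleigh quotient for $\lambda_1^{c^2}$, obtaining
\[
\lambda_1^{c^2} \;\le\; \lambda_1^{4c^2} \;-\; 3c^2\,\frac{\int_{\theta_0}^{\theta_1} h_2^2\,d\theta}{\int_{\theta_0}^{\theta_1} \csc^2\theta\, h_2^2\,d\theta}.
\]
Using $\csc^2\theta \le \csc^2\theta_{\ast}$ on $[\theta_0,\theta_1]$, the denominator is at most $\csc^2\theta_{\ast}\int h_2^2\,d\theta$, so the fraction is at least $\sin^2\theta_{\ast}$, giving $\lambda_2 - \lambda_1 \ge 3\sin^2\theta_{\ast}\,c^2$.

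There is no real obstacle here; the entire argument is a two-line variational comparison, and the strict inequalities follow from the fact that $\csc^2\theta$ is not constant on $[\theta_0,\theta_1]$ (so neither bound on the ratio is saturated by the positive eigenfunctions). The final sentence, that $\lambda_2-\lambda_1 \to 3c^2$ as $\theta_{\ast}\to\pi/2$, is then immediate by squeezing since $\sin^2\theta_{\ast}\to 1$.
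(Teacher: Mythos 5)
Your argument is correct, and it takes a genuinely different route from the paper's proof. You rewrite \eqref{eq:h-equation} as a weighted eigenvalue problem with weight $\csc^2\theta$ and use the variational characterization
\[
\lambda_1^\mu=\min_{\varphi}\frac{\int(|\varphi_\theta|^2+\mu\varphi^2)\,d\theta}{\int\csc^2\theta\,\varphi^2\,d\theta},
\]
then plug each first eigenfunction in as a test function for the other value of $\mu$; the difference $3c^2$ between $\mu=c^2$ and $\mu=4c^2$ appears only through the ratio $\int h^2\,d\theta/\int\csc^2\theta\,h^2\,d\theta$, which lies strictly between $\sin^2\theta_\ast$ and $1$ because $\csc^2\theta$ is nonconstant and pinched between $1$ and $\csc^2\theta_\ast$. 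The paper instead argues by contradiction with Sturm comparison theorem I from the appendix: if the lower (resp.\ upper) gap bound failed, the potentials $\lambda_2\csc^2\theta-4c^2$ and $\lambda_1\csc^2\theta-c^2$ would be ordered, strictly at interior points, and Sturm comparison would then force $h^{(2)}(\theta_1)>0$ (resp.\ $h^{(1)}(\theta_1)>0$), contradicting the Dirichlet condition. Both proofs are short and standard; the variational version has the virtue of making transparent exactly why the ratio $\int h^2/\int\csc^2\theta\,h^2$ controls the gap, which is precisely the quantity the paper then estimates more carefully in Section \ref{gap-section} via the perturbation formula $\ddt{\lambda}=3c^2\int(h^t)^2/\int\csc^2\theta\,(h^t)^2$, so your argument is, in effect, the two-endpoint version of the continuous homotopy the paper runs to get the sharper bound \eqref{eq:gap-estimate-improved}.
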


	\begin{proof}

	  Recall that the $\lambda_1$ is the first eigenvalue of \eqref{h-first} and, from our condition on $c$, $\lambda_2$ is the first eigenvalue of \eqref{h-second}. Let us denote by $h^{(1)}$ and $h^{(2)}$ the corresponding eigenfunctions, i.e.
	  \begin{align*}
	    h^{(1)}_{\theta\theta} + (\lambda_1 \csc^2 \theta -c^2) h^{(1)} &=0,\\
	    h^{(2)}_{\theta\theta} + (\lambda_2 \csc^2 \theta -4c^2) h^{(2)} &=0.
	  \end{align*}

	We argue by contradiction using Sturm comparison theorem I from the appendix. Suppose that $\lambda_2 \leq  \lambda_1+3 c^2 \sin^2 \theta_{\ast} $. We would have 
	  \[
	    \lambda_2 \csc^2 \theta - 4 c^2 
	    \leq \lambda_1 \csc^2 \theta + 3 c^2 \sin^2 \theta_{\ast} \csc^2 \theta - 4c^2 
	    \leq \lambda_1 \csc^2 \theta -c^2,
	\]
	where the last inequality is strict at interior points, and so there is no possibility of the left- and right- hand terms being equivalent.  This would mean that $h^{(2)}(\theta_1) >0$, which contradicts the Dirichlet boundary conditions. The other inequality is proved similarly using the fact that $\csc^2 \theta \geq 1$. 
	\end{proof}

\section{Estimating the diameter}  \label{diameter-section}

We start by recalling the  well known distance formula between two points in the hyperbolic plane
	\begin{align} \notag
    \text{dist}\left( (x_1,y_1),(x_2,y_2) \right) &= \arcosh \left(1 + \frac{(x_2 - x_1)^2 + (y_2-y_1)^2)}{2 y_1 y_2} \right) \\
    \label{eq:distance}
    	&= \arcosh \left( \frac{(x_1^2+y_1^2) + (x_2^2+y_2^2) - 2x_1x_2}{2 y_1y_2}\right).
      \end{align}

      The last form of the distance shows that for any $r$, the distance from a point $(r \cos \alpha, r \sin \alpha)$ to another point $(r \cos \beta, r \sin \beta)$ depends only on the angles $\alpha$ and $\beta$ and not on the radius $r$. 

We label the corners of our domain: given $\Omega_{c,\theta_0, \theta_1}$, we use cartesian coordinates and set $P = (\cos \theta_0, \sin \theta_0)$, $Q = e^{\pi/c} (\cos \theta_0, \sin \theta_0)$, $R=e^{\pi/c} (\cos \theta_1, \sin \theta_1)$, and $S = (\cos \theta_1, \sin \theta_1)$ (see Figure \ref{fig:OmegaDomain}).  This convex domain has a  piecewise smooth boundary.  The top and bottom boundary components are geodesics, while the lateral boundaries are not.
  \begin{proposition}
    \label{prop:Diameter}
    The diameter $D_{c,\theta_0, \theta_1}$ of the domain $\Omega_{c,\theta_0, \theta_1}$ is given by 
    \[
      D_{c,\theta_0, \theta_1} = \max \{\textrm{dist}(P,Q), \textrm{dist}(P, R),  \textrm{dist}(R, S)\}.
    \]
  \end{proposition}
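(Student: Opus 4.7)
The plan is to reduce the diameter computation to comparing pairwise distances between the four corners $P,Q,R,S$, and then to verify that only the three listed pairs can attain the maximum.

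I would first observe that $D$ is realized on the boundary: if $(p^*, q^*) \in \overline\Omega \times \overline\Omega$ realizes $D$ and $p^*$ lies in the interior of $\Omega$, then by convexity of $\Omega$ the geodesic from $q^*$ through $p^*$ extends past $p^*$ inside $\Omega$ until meeting $\partial \Omega$ at some $\tilde p$, and $\textrm{dist}(q^*, \tilde p) > \textrm{dist}(q^*, p^*)$, a contradiction. Next, I would show that on each of the four boundary pieces the function $p \mapsto \textrm{dist}(p, q^*)$ is maximized at an endpoint. The top and bottom arcs are hyperbolic geodesics (Euclidean semicircles centered at the origin, which lies on the $x$-axis), so the well-known strict convexity of $s \mapsto \textrm{dist}(q^*, \gamma(s))$ along any geodesic $\gamma$ in the Cartan--Hadamard space $\mathbb H^2$ gives the claim on these arcs. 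On a lateral side $\theta = \theta_j$, I would parametrize $p(r) = (r\cos\theta_j, r\sin\theta_j)$ for $r \in [1, e^{\pi/c}]$ and, using \eqref{eq:distance}, write
\[
\textrm{dist}(p(r), q^*) = \arcosh\!\left( \frac{1}{2q^*_y \sin\theta_j}\Bigl(r + \frac{|q^*|^2}{r}\Bigr) - \frac{q^*_x \cos\theta_j}{q^*_y \sin\theta_j}\right),
\]
whose argument is manifestly convex in $r$, hence maximized at $r \in \{1, e^{\pi/c}\}$. Swapping the roles of $p$ and $q$ shows that the diameter is realized at a pair of corners.

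It then remains to eliminate the three non-listed pairs $(P,S)$, $(Q,R)$ and $(Q,S)$. The observation immediately after \eqref{eq:distance} --- that the distance between two points at a common radius depends only on the angular coordinates --- gives $\textrm{dist}(P, S) = \textrm{dist}(Q, R)$. A direct evaluation of \eqref{eq:distance} yields
\[
\textrm{dist}(P, R) = \textrm{dist}(Q, S) = \arcosh\!\left( \frac{\cosh(\pi/c) - \cos\theta_0 \cos\theta_1}{\sin\theta_0 \sin\theta_1}\right),
\]
while the analogous evaluation for $(P, S)$ replaces $\cosh(\pi/c)$ by $1$. Since $\cosh(\pi/c) \geq 1$, we obtain $\textrm{dist}(P, S) = \textrm{dist}(Q, R) \leq \textrm{dist}(P, R)$, so none of the three non-listed pairs exceeds $\textrm{dist}(P, R)$, which establishes the proposition.

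I expect the main obstacle to be the lateral-side step: since the sides $\theta = \theta_j$ are not hyperbolic geodesics, the convenient convexity of distance along a geodesic does not apply, and one genuinely needs the explicit computation displayed above to verify that the argument of $\arcosh$ is convex in $r$. The rest of the argument reduces to the explicit distance formula \eqref{eq:distance} and the standard convexity of distance in $\mathbb H^2$.
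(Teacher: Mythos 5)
Your proof is correct, and it takes a genuinely different route from the paper's on the lateral sides $\theta=\theta_j$. The paper argues geometrically: if a diameter-realizing endpoint $V$ lay in the interior of a lateral side, the geodesic ball of radius $D$ centered at the other endpoint $W$ would be internally tangent to $\partial\Omega$ at $V$, so by Gauss' Lemma the minimizing geodesic from $W$ must hit $V$ orthogonally, and the only geodesic orthogonal to the ray $\theta=\theta_j$ is the arc $r=\mathrm{const}$; this forces $W$ to lie on the opposite lateral side at the same radius, whence $\textrm{dist}(V,W)=\textrm{dist}(Q,R)$, which is then discarded since $\textrm{dist}(P,R)>\textrm{dist}(Q,R)$. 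You instead compute from \eqref{eq:distance} that, for fixed $q^*$, the argument of $\arcosh$ is a positive multiple of $r+|q^*|^2/r$ plus a constant, hence convex in $r$; since $\arcosh$ is increasing, the restriction of $\textrm{dist}(\cdot,q^*)$ to the lateral side is maximized at an endpoint. Your computation is more elementary (no Gauss' Lemma, no discussion of tangency) and directly yields that the diameter is achieved at a pair of corners, whereas the paper needs a small extra step to fold the lateral-interior case back into the corner comparison. Both approaches agree on the top/bottom arcs, where the argument is convexity of distance along a geodesic in a Cartan--Hadamard manifold (the paper phrases this as the midpoint inequality under negative curvature), and on the final elimination of the pairs $(P,S)$, $(Q,R)$, $(Q,S)$ via the symmetries $\textrm{dist}(P,S)=\textrm{dist}(Q,R)$, $\textrm{dist}(Q,S)=\textrm{dist}(P,R)$, and the monotonicity of $\cosh(\pi/c)$ in the explicit formula. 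One small remark: the distance along the top/bottom geodesic fails to be \emph{strictly} convex when $q^*$ lies on that geodesic, but plain convexity suffices to put the maximum at an endpoint, so this does not affect the argument.
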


  \begin{proof}
    We consider the closure $\overline \Omega :=\overline \Omega_{c,\theta_0, \theta_1}$ of our domain. Because $\overline \Omega$ is compact, the diameter is achieved, so we can choose points $V$ and $W$ such that $D_{c, \theta_0, \theta_1} = \textrm{dist}(V, W)$. We denote by $\gamma$ the geodesic segment between $V$ and $W$;   $\gamma$ is either a segment of a circle centered on the $x$-axis or a vertical line.

    First, we observe that neither $V$ nor $W$ is in the interior of $\Omega_{c, \theta_1}$, otherwise one would be able to prolong $\gamma$ and obtain a distance longer than the diameter.

    Next, we will show that neither $V$ nor $W$ can be in the interior of a boundary segment, in other words both $V$ and $W$ must be end points of boundary segments, which we also refer to as corners of the domain.

Suppose that one of the points $V$, $W$, say $V$, is in the interior of the top boundary segment $\overline{RQ}$.   
Since $V$ is not a corner point, there exists $T$, $T'$ points on the top boundary which is also a geodesic segment, such that $V$ is the midpoint between $T$ and $T'$.   Since $\mathbb{H}^2$ has negative curvature, we obtain the contradiction
\[D= \textrm{dist}(V,W)< \frac12\left(\textrm{dist}(T,W)+  \textrm{dist}(T',W)\right)\le D.\]
The same argument also shows that neither $V$ nor $W$ can belong to the interior of the lower boundary segment, $\overline{SP}$.

Suppose now that one of the points, say $V $, belongs to the interior of the lateral segment $\overline{RS}$. The closed geodesic ball of radius $D_{c, \theta_0, \theta_1}$ centered at $W$ contains $\overline \Omega$ and the boundaries of the ball and the domain touch at $V$. Since the boundary of $\overline \Omega$ is smooth at $V$, the tangent directions to the ball and the domain match. By Gauss' Lemma, the geodesic $\gamma$, which is a radius of the ball, is perpendicular to $\partial \overline \Omega$. The only geodesic starting at $V$ and perpendicular to $\overline{RS}$ is the arc of circle centered at the origin. If $V = (r \cos \theta_1, r \sin \theta_1)$, then $W=(r \cos \theta_0, r \sin \theta_0)$ with the same $r$, and dist$(V,W)$ = dist$(Q, R)$. 

%
    
    The discussion implies that
    \[
      D_{c,\theta_0, \theta_1} = \max \{\textrm{dist}(P,Q), \textrm{dist}(P, R), \textrm{dist}(Q,R), \textrm{dist}(R, S)\},
    \]
    since $\textrm{dist}(S, Q) = \textrm{dist}(R, P)$ and $\textrm{dist}(S, P) = \textrm{dist}(Q,R)$.  We finish the proof by computing all the distances using formula \eqref{eq:distance}, where $ \Psi(\alpha, \beta) = \frac{e^{2 \pi/c}+1 - 2 e^{\pi/c} \cos \alpha \cos \beta}{2 e^{\pi/c}\sin\alpha \sin \beta}$:     \begin{align*}
      \textrm{dist}(P,Q) & = \arcosh(\Psi(\theta_0, \theta_0))& 
      \textrm{dist}(P, R) & = \arcosh(\Psi(\theta_0, \theta_1))\\
    \textrm{dist}(Q,R) & =  \arcosh\left( \frac{1 -  \cos\theta_0 \cos \theta_1}{ \sin \theta_0\sin \theta_1} \right) & %
    \textrm{dist}(R, S) & = \arcosh(\Psi (\theta_1, \theta_1)).
    \end{align*}
    Note that 
    \[
      \cosh(\textrm{dist}(P,R))-\cosh( \textrm{dist}(Q, R)) = \left( e^{\pi/c} -1 \right)^2 /\left( 2 e^{\pi/c} \sin\theta_0 \sin\theta_1 \right) >0, 
    \]
    so $\textrm{dist}(P,R) > \textrm{dist}(Q, R) $. 
  \end{proof}

  It is worth mentioning that $\Psi(\theta_i, \theta_i) \leq \Psi(\theta_{\ast}, \theta_{\ast})$ for $i =0,1$, so $\textrm{dist}(P,Q)$ and $\textrm{dist}(R, S) \leq \arcosh \left( \Psi(\theta_{\ast}, \theta_{\ast}) \right)$. Note also that $\Psi(\theta_i, \theta_i) \leq \Psi(\theta_{\ast}, \pi-\theta_{\ast})$ for $i=1,2$, so the diameter of $\Omega_{c, \theta_{\ast}, \pi-\theta_{\ast}}$ is achieved by dist($P, R$). From these remarks, and Proposition \ref{prop:Diameter}, we get the following estimates for the diameter. 
   \begin{corollary} The following double inequality holds for the diameter $D_{c, \theta_0, \theta_1}$:
    \begin{equation}
      \label{eq:EstimateDiameter}
      \arcosh\left( \csc \theta_{\ast} \cosh(\pi/c) \right)\le D_{c, \theta_0, \theta_1} \leq \arcosh\left( \csc^2 \theta_{\ast} \cosh (\pi/c) + \cot^2 \theta_{\ast}\right).
    \end{equation}
  \end{corollary}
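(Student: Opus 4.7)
The plan is to combine Proposition~\ref{prop:Diameter} with the explicit corner-to-corner distance formulas appearing in its proof, and then reduce both inequalities to elementary trigonometric bookkeeping in terms of $\theta_{\ast}$. First I would simplify $\Psi$ using $e^{2\pi/c}+1 = 2e^{\pi/c}\cosh(\pi/c)$, obtaining
\[
\Psi(\alpha,\beta) = \frac{\cosh(\pi/c) - \cos\alpha\cos\beta}{\sin\alpha\sin\beta}.
\]
Two evaluations are central:
\[
\Psi(\theta,\theta) = \csc^2\theta\,\cosh(\pi/c) - \cot^2\theta, \qquad \Psi(\theta_{\ast},\pi-\theta_{\ast}) = \csc^2\theta_{\ast}\,\cosh(\pi/c) + \cot^2\theta_{\ast},
\]
the latter being exactly the target upper bound under $\arcosh$. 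Throughout I will use that $\theta_0$ and $\pi-\theta_1$ both lie in $[\theta_{\ast},\pi/2)$, giving $\sin\theta_i\ge\sin\theta_{\ast}$, $\cot\theta_0\le\cot\theta_{\ast}$, and $-\cot\theta_1=\cot(\pi-\theta_1)\le\cot\theta_{\ast}$.

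For the lower bound it suffices to show $\Psi(\theta_0,\theta_1)\ge \csc\theta_{\ast}\cosh(\pi/c)$, since Proposition~\ref{prop:Diameter} gives $D_{c,\theta_0,\theta_1}\ge \textrm{dist}(P,R)=\arcosh\Psi(\theta_0,\theta_1)$. The sign condition $\theta_0<\pi/2<\theta_1$ makes $\cos\theta_0\cos\theta_1\le 0$, so $\Psi(\theta_0,\theta_1)\ge \cosh(\pi/c)/(\sin\theta_0\sin\theta_1)$. A short case split on whether $\theta_{\ast}=\theta_0$ or $\theta_{\ast}=\pi-\theta_1$ then yields $\sin\theta_0\sin\theta_1\le \sin\theta_{\ast}$: in either case one of the two factors equals $\sin\theta_{\ast}$ and the other is at most $1$.

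For the upper bound I would bound each of $\Psi(\theta_0,\theta_0)$, $\Psi(\theta_1,\theta_1)$, $\Psi(\theta_0,\theta_1)$ above by $\csc^2\theta_{\ast}\cosh(\pi/c)+\cot^2\theta_{\ast}$, which via Proposition~\ref{prop:Diameter} dominates $\cosh D_{c,\theta_0,\theta_1}$. For the diagonal terms this follows immediately from $\csc^2\theta_i\le\csc^2\theta_{\ast}$ and $-\cot^2\theta_i\le 0\le\cot^2\theta_{\ast}$. For the cross term, $1/(\sin\theta_0\sin\theta_1)\le\csc^2\theta_{\ast}$ because $\sin\theta_0\sin\theta_1\ge\sin^2\theta_{\ast}$, while $-\cot\theta_0\cot\theta_1=\cot\theta_0\cdot(-\cot\theta_1)\le\cot\theta_{\ast}\cdot\cot\theta_{\ast}=\cot^2\theta_{\ast}$ by the monotonicity facts above.

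There is no serious technical obstacle: the geometric content was already absorbed into Proposition~\ref{prop:Diameter}, and what remains is bookkeeping with $\sin$ and $\cot$ on $(0,\pi/2)$. The only subtlety is the lower-bound inequality $\sin\theta_0\sin\theta_1\le \sin\theta_{\ast}$, which must be argued by cases according to which of $\theta_0$ or $\pi-\theta_1$ realizes $\theta_{\ast}$.
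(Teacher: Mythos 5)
Your proof is correct, and it takes a somewhat different route than the paper. For the upper bound the paper uses domain monotonicity: $\Omega_{c,\theta_0,\theta_1}\subseteq\Omega_{c,\theta_{\ast},\pi-\theta_{\ast}}$, and the right-hand side of \eqref{eq:EstimateDiameter} is identified as the diameter of the larger, symmetric domain (which the text preceding the corollary observes is $\arcosh\Psi(\theta_{\ast},\pi-\theta_{\ast})=\textrm{dist}(P,R)$ for that domain); you instead bound each of the three corner-to-corner distances from Proposition~\ref{prop:Diameter} above by the target quantity, which is a purely computational path. For the lower bound the paper introduces the auxiliary boundary point $(0,e^{\pi/c})$ and computes the single distance $\textrm{dist}(P,(0,e^{\pi/c}))=\arcosh(\csc\theta_0\cosh(\pi/c))$ (respectively from $S$, giving $\arcosh(\csc(\pi-\theta_1)\cosh(\pi/c))$), which immediately gives $\arcosh(\csc\theta_{\ast}\cosh(\pi/c))$ after taking the larger of the two; you avoid the extra point and instead bound $\Psi(\theta_0,\theta_1)$ from below directly, using $\cos\theta_0\cos\theta_1\le 0$ and the case-split inequality $\sin\theta_0\sin\theta_1\le\sin\theta_{\ast}$. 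The paper's upper-bound argument is more conceptual and scales to more general domains, while yours stays entirely inside Proposition~\ref{prop:Diameter}; the paper's lower-bound argument is a one-line computation at the cost of one extra point, while yours needs the small case analysis you flagged. Both are valid and essentially the same length.
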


  \begin{proof}
    The right inequality holds because the domain $\Omega_{c, \theta_0, \theta_1} \subseteq \Omega_{c, \theta_{\ast}, \pi-\theta_{\ast}}$ and the right hand side is the diameter of $\Omega_{c, \theta_{\ast}, \pi-\theta_{\ast}}$. The left inequality is proved by noting that the diameter is greater than the distance from $P$ (or $R$) to the point $(0, e^{\pi/c})$.
   \end{proof}

We will now estimate the diameter more explicitly in terms of $c$ in order to compare it with the fundamental gap. 

  \begin{lemma} \label{diam-c}
The following limit holds: 	$\frac{\pi^2}{c^2 D^2_{c,\theta_0, \theta_1}} \to 1$ as $c \to 0$ or $\theta_{\ast} \to \tfrac{\pi}{2}$.
  	\end{lemma}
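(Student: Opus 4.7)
The plan is to squeeze $D_{c,\theta_0,\theta_1}$ between the two bounds of \eqref{eq:EstimateDiameter} and verify that both are asymptotic to $\pi/c$ in each of the two regimes. The main analytic input is the elementary asymptotic $\arcosh(x)=\log\bigl(x+\sqrt{x^2-1}\bigr)=\log(2x)+O(x^{-2})$ as $x\to\infty$, combined with $\cosh(\pi/c)=\tfrac12 e^{\pi/c}\bigl(1+e^{-2\pi/c}\bigr)$, so that $\log\bigl(2\cosh(\pi/c)\bigr)=\pi/c+o(1)$ as $c\to 0$.

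For the regime $c\to 0$ with $\theta_{\ast}\in(0,\pi/2)$ fixed, both arguments of $\arcosh$ in \eqref{eq:EstimateDiameter} blow up like $\csc\theta_{\ast}\cosh(\pi/c)$ and $\csc^2\theta_{\ast}\cosh(\pi/c)$ respectively, the $\cot^2\theta_{\ast}$ summand in the upper bound being negligible against the exponentially growing main term. Plugging into the asymptotic I would obtain, for both sides of \eqref{eq:EstimateDiameter}, an expansion of the form $\pi/c+O(1)$, where the $O(1)$ constant depends only on $\theta_{\ast}$. Hence $cD_{c,\theta_0,\theta_1}=\pi+O(c)$, and squaring and inverting yields $\pi^2/(c^2 D^2_{c,\theta_0,\theta_1})\to 1$. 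For the regime $\theta_{\ast}\to\pi/2$ with $c$ fixed, I would just use continuity: $\csc\theta_{\ast}\to 1$, $\csc^2\theta_{\ast}\to 1$, and $\cot^2\theta_{\ast}\to 0$, so both bounds converge to $\arcosh\bigl(\cosh(\pi/c)\bigr)=\pi/c$, forcing $D\to\pi/c$.

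No step presents a genuine obstacle; the statement is really just an unpacking of the closed-form estimate \eqref{eq:EstimateDiameter} together with standard asymptotics of $\arcosh$. The only mild care required is to check that the $\cot^2\theta_{\ast}$ lower-order term in the upper bound does not spoil the leading-order behaviour, which it does not since it is bounded (in the $c\to 0$ regime) or vanishes (in the $\theta_{\ast}\to\pi/2$ regime) while the main argument of $\arcosh$ tends to infinity in the first case or is fixed in the second.
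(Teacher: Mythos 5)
Your argument is correct and follows essentially the same route as the paper's: squeeze $D_{c,\theta_0,\theta_1}$ between the two sides of \eqref{eq:EstimateDiameter} and expand $\arcosh$ logarithmically. The one substantive difference is that the paper does not stop at the limit: it derives the explicit two-sided inequality \eqref{eq:diameter-estimate},
\[
 c^2 \left(1  + \tfrac{c}{\pi} \left(2\ln (\csc\theta_{\ast})+\eta \right)\right)^{-2} \leq \frac{\pi^2}{D^2_{c, \theta_0,\theta_1}} \leq  c^2 \left( 1 + \tfrac{ c}{\pi}\ln ( \csc\theta_{\ast} ) \right)^{-2},
\]
valid for all $c$ and $\theta_{\ast}$, by using $\arcosh(x)=\ln(x+\sqrt{x^2-1})$ directly and a concavity inequality, rather than an $O(\cdot)$ asymptotic. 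This quantitative version is what gets combined with \eqref{eq:RoughFudamentalGap} in Section~\ref{gap-section} to identify domains with $\lambda_2-\lambda_1>3\pi^2/D^2$ and also to prove Theorem~\ref{main theorem}; your asymptotic treatment proves the stated lemma but would not, on its own, supply that finite-$c$ estimate. Your observation that the two limiting regimes ($c\to 0$ fixed $\theta_{\ast}$, and $\theta_{\ast}\to\pi/2$ fixed $c$) can be handled separately — the first by the $\log(2x)$ asymptotic, the second by plain continuity — is a clean way to organise the limit, whereas the paper handles both uniformly through the single error term $\eta(\theta_{\ast},c)$.
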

 \begin{proof} 	
    We use \eqref{eq:EstimateDiameter} and the formula $\arcosh (x) = \ln(x + \sqrt{x^2 -1})$ to find
  \begin{align*}
    \arcosh \left(\csc \theta_{\ast} \cosh \left( \pi/c \right)\right) & = \ln\left(\csc \theta_{\ast}\cosh\left( \pi/c \right) + \sqrt{\csc^2 \theta_{\ast} \cosh^2\left( \pi/c \right) -1}\right)\\
    & \geq \ln (\csc\theta_{\ast}) + \left( \pi/c \right).
    \end{align*}
    Writing $a = \cosh (\pi/c)+ \cos^2 \theta_{\ast}$ for brevity, we estimate the upper bound in a similar way 
    \begin{align*}
      \arcosh (a \csc^2 \theta_{\ast} )
      &= 2 \ln(\csc \theta_{\ast}) + \ln \left( a + \sqrt{a^2 - \sin^4 \theta_{\ast}}\right)\\
      & \le 2 \ln(\csc \theta_{\ast}) + \ln \left( a + \sqrt{a^2 - 1} + \sqrt{1 - \sin^4 \theta_{\ast}}\right).
    \end{align*}
    We remark that both $\ln$ and $\arcosh$ are concave functions, and that $f(x+b) \leq f(x) + f'(x)\, b$ for concave functions, thereby
    \begin{align*}
      \arcosh(a \csc^2 \theta_{\ast}) 
      & \le 2 \ln(\csc \theta_{\ast}) + (\pi/c) + \frac{\cos^2 \theta_{\ast}}{\sinh(\pi/c)} + \frac{\sqrt{1 - \sin^4 \theta_{\ast}}}{a+\sqrt{a^2 -1}}\\
      & = 2 \ln(\csc \theta_{\ast}) + (\pi/c) + \eta (\theta_{\ast}, c),
    \end{align*}
    where $\eta (\theta_{\ast}, c) = \tfrac{\cos^2 \theta_{\ast}}{\sinh(\pi/c)} + \tfrac{\sqrt{1 - \sin^4 \theta_{\ast}}}{a+\sqrt{a^2 -1}} $, which goes to zero as $c$ tends to zero or $\theta_{\ast} \to \tfrac{\pi}{2}$. 

    Therefore, we have 
    \begin{equation}
      \label{eq:diameter-estimate}
       c^2 \left(1  + \frac{c}{\pi} \left(2\ln (\csc\theta_{\ast})+\eta \right)\right)^{-2} \leq \frac{\pi^2}{D^2_{c, \theta_0,\theta_1}} \leq  c^2 \left( 1 + \frac{ c}{\pi}\ln \left( \csc\theta_{\ast} \right) \right)^{-2} <  c^2. 
    \end{equation}
    This shows that $\frac{\pi^2}{c^2 D^2_{c,\theta_0, \theta_1}} \to 1$ as $c \to 0$ or $\theta_{\ast} \to \tfrac{\pi}{2}$. 
    \end{proof}

\section{Estimating the fundamental gap} \label{gap-section}

From Lemmas~\ref{diam-c} and \ref{lemma on second eigenvalue}, we have that the gap of the domains $\Omega_{c, \theta_0, \theta_1}$ approaches $\tfrac{3\pi^2}{D^2}$ as $\theta_{\ast} \to \tfrac{\pi}{2}$.  In fact, combining \eqref{eq:RoughFudamentalGap} and \eqref{eq:diameter-estimate} gives
\begin{equation}
\sin^2 \theta_{\ast}  (1 + \frac{c}{\pi} \ln (\csc \theta_{\ast} ))^2  < (\lambda_2-\lambda_1) \cdot \tfrac{D^2_{c, \theta_0,\theta_1} }{3\pi^2}  < \left(1  + \frac{c}{\pi} \left(2\ln (\csc\theta_{\ast})+\eta \right)\right)^{2} . 
 \end{equation}

The left hand side is $\ge1$ when $c > \pi$ and $\sin \theta_{\ast} \ge \exp (\tfrac{\pi}{c}-1)$. For any fixed $c> \pi$, and for all $\theta_{\ast}$ sufficiently close to $\tfrac{\pi}{2}$,  we have that $c, \ \theta_{\ast}$ satisfy $\sin \theta_{\ast} \ge \exp (\tfrac{\pi}{c}-1)$ and condition \eqref{eq:Bound-c}. For these domains, the gap satisfies the inequality $\lambda_2-\lambda_1 > \tfrac{3\pi^2}{D^2}$. 
 
In order to prove Theorem~\ref{main theorem} though, we have to improve the upper bound estimate of the fundamental gap in \eqref{eq:RoughFudamentalGap}. We use the variation method as in \cite{Lavine-gap} and Sturm comparison for Jacobi equations to obtain the estimate.  

 Since  we  assume that $c, \theta_{\ast}$ satisfy  \eqref{eq:Bound-c},  the first and second Dirichlet eigenvalues of $-\Delta$ on $\Omega_{c, \theta_0, \theta_1}$ are given by the first eigenvalues of \eqref{h-first} and \eqref{h-second}, respectively.

Consider a family of problems generalizing \eqref{eq:h-equation}, indexed by a parameter $t$
  \begin{equation}
  h''+ v(t) h=\mu(t) h \text{ on $I=[\theta_0, \theta_1]$,}
  \label{evalue}
  \end{equation}
  with vanishing Dirichlet boundary conditions.   Here $h(\theta) = h^t(\theta)$  depends on $t$, and $v$ also depends on $t$, via setting $v(t)=\lambda(t)\csc^2 \theta$. Let $\lambda (t)$ be the first eigenvalue for each $t$, which is smooth in  $t$, and $h^t(\theta)$ are all  first eigenfunctions, so $h^t(\theta) >0$ on $(\theta_0, \theta_1)$.


 Denoting derivatives with respect to $t$ as $\ddt{h}$, we get
  \begin{equation}
    \ddt{h}''+ \ddt{v} h+ v\ddt{h}=\ddt{\mu} h+ \mu\ddt{h} \text{ on $I$}.
  \label{evalue t}
  \end{equation}
To relate changes in $\mu$ with changes in $v$,  we multiply \eqref{evalue t} by $h$, integrate over $I$, and use \eqref{evalue} to find
  \begin{equation*}
  \ddt{\mu} \int h^2 d\theta= \int \ddt{v} h^2 d\theta.
  \end{equation*}
Therefore, if we {set}  $\mu(t)= c^2+3c^2 t$, which determines $\lambda(t)$, we have 
  \[
    3c^2\int h^2 d\theta= \int \ddt{\lambda}(\csc^2 \theta) h^2 d\theta= \ddt{\lambda} \int (\csc^2 \theta) h^2 d\theta.
  \]
  If we rearrange this as 
  \[ \ddt{\lambda}= 3c^2 \frac{\int (h^t)^2 d\theta}{  \int (\csc^2  \theta)(h^t)^2d\theta },\]
  and integrate over $t$ from $0$ to $1$,  recalling that $\lambda(0)=\lambda_1$ and $\lambda(1)=\lambda_2$, we find
  \begin{equation}
  \label{eq:gap-estimate-improved}
  \lambda_2 -\lambda_1 \le 3 c^2 \max_{t\in[0,1]}\frac{\int (h^t)^2 d\theta}{  \int (\csc^2  \theta)(h^t)^2d\theta }.  
  \end{equation}

  \begin{proposition}
    \label{prop:quotient-estimate}
   \begin{equation}\label{concrete} \max_t\frac{\int (h^t)^2 d\theta}{  \int (\csc^2  \theta)(h^t)^2d\theta} \leq 1 -\delta,
   \end{equation}
for some $\delta = \delta (\theta_0, \theta_1)>0$, independent of $c$.   
  \end{proposition}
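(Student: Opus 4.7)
The plan is to control $R(t) := \int (h^t)^2\,d\theta / \int \csc^2\theta\,(h^t)^2\,d\theta$ uniformly away from $1$ by combining a Rayleigh-quotient (test-function) bound on $\lambda(t)$, effective for large $\mu(t)$, with a continuity-and-compactness argument handling small $\mu(t)$. Multiplying \eqref{evalue} by $h^t$ and integrating by parts using the Dirichlet condition yields the identity
\[
  R(t) = \frac{\lambda(t)}{q(t) + \mu(t)}, \qquad q(t) := \frac{\int (h^t_\theta)^2\,d\theta}{\int (h^t)^2\,d\theta}\ge 0,
\]
so in particular $R(t) \le \lambda(t)/\mu(t)$.

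For the large-$\mu(t)$ regime, I would fix $\epsilon = \epsilon(\theta_0,\theta_1)>0$ small enough that $\theta_0+\epsilon < \pi/2$ and insert the test function $\phi(\theta) := \sin\bigl(\pi(\theta-\theta_0)/\epsilon\bigr)$ on $[\theta_0,\theta_0+\epsilon]$ (extended by $0$) into the Rayleigh quotient characterization of the first eigenvalue of \eqref{evalue}. Using $\csc^2\theta \ge \csc^2(\theta_0+\epsilon)$ on the support gives
\[
  \lambda(t) \le \sin^2(\theta_0+\epsilon)\bigl(\mu(t) + \pi^2/\epsilon^2\bigr) =: A\mu(t) + B,
\]
with $A := \sin^2(\theta_0+\epsilon) < 1$ and $B := A\pi^2/\epsilon^2$ depending only on $\theta_0$ and $\theta_1$. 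Combined with the identity, $R(t) \le A + B/\mu(t)$, so setting $\mu_* := 2B/(1-A)$ we have $R(t) \le (1+A)/2$ whenever $\mu(t) \ge \mu_*$.

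For the complementary range $\mu(t) \in [0, \mu_*]$, the first eigenvalue $\lambda_1^\mu$ and the normalized first eigenfunction $h_\mu$ of \eqref{h-first} depend continuously on $\mu$ on the compact set $[0, \mu_*]$ (the endpoint $\mu=0$ is the regular Sturm--Liouville limit $h'' + \lambda\csc^2\theta\,h=0$), so the scalar $R(\mu) := \int h_\mu^2\,d\theta/\int\csc^2\theta\,h_\mu^2\,d\theta$ is continuous on $[0,\mu_*]$. Because $\csc^2\theta > 1$ on $(\theta_0,\theta_1)\setminus\{\pi/2\}$, $R(\mu) < 1$ pointwise, and compactness forces $R_* := \max_{\mu\in[0,\mu_*]} R(\mu) < 1$. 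Taking $\delta := \min\{(1-A)/2,\ 1-R_*\} > 0$ yields \eqref{concrete} with $\delta$ depending only on $\theta_0$ and $\theta_1$.

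The key technical content lies in the test-function step: the strict inequality $A = \sin^2(\theta_0+\epsilon) < 1$ encodes precisely the hyperbolic feature that the metric weight $\csc^2\theta$ strictly exceeds $1$ away from $\pi/2$, which is what prevents $\lambda(t)/\mu(t)$, and hence $R(t)$, from approaching $1$ as $c\to\infty$. The bounded-$\mu$ step, by contrast, is qualitative; making $\delta$ fully explicit would require a quantitative perturbation estimate of the eigenfunction near $\mu=0$, or alternatively the Sturm comparison for Jacobi equations that the authors indicate.
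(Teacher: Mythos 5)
Your proof is correct, but it takes a genuinely different route from the paper's. The paper works directly with the integral ratio: it uses the Sturm comparison lemma already established to sandwich $h^t$ between explicit sine functions $w_1\le h^t\le w_2$ (with frequencies $\sqrt{\sigma_1},\sqrt{\sigma_2}$ depending on $\theta_\ast$ and $c$), then splits $\int\csc^2\theta\,h^2 = \int h^2 + \int(\csc^2\theta-1)h^2$ and bounds the second piece below on a fixed subinterval $[\theta_0,\alpha]$ with $\alpha<\min(\pi/2,\tilde\theta)$, using $w_1$ to get a lower bound $b>0$ and $w_2$ to get an upper bound on $\int h^2$; the constraint \eqref{eq:Bound-c} caps $c$ so that $\sigma_1$, hence $b$, is controlled. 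Your argument instead converts the ratio via integration by parts into $R=\lambda/(\mu+q)\le\lambda/\mu$, then uses a localized test function to extract the strict factor $\sin^2(\theta_0+\epsilon)<1$ for large $\mu$, and a continuity--compactness argument on $[0,\mu_*]$ for small $\mu$. The two approaches buy different things: the paper's is more constructive (every quantity down to $b$ is explicit, at the cost of leaning on the assumed bound on $c$), whereas yours cleanly separates the ``hyperbolic weight'' mechanism $\csc^2\theta>1$ from the comparison machinery, and your large-$\mu$ branch actually establishes the bound uniformly for \emph{all} $c>0$, which is slightly stronger than what is needed here. The trade-off is that your small-$\mu$ branch is nonconstructive (compactness gives existence of $R_*<1$ without an explicit value), whereas the paper's Sturm estimates at least indicate how $\delta$ scales. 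One small slip: the continuity argument should reference the family \eqref{eq:h-equation} parametrized by $\mu$, not the fixed instance \eqref{h-first}; this is cosmetic and does not affect the argument.
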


  Before we start the proof of Proposition \ref{prop:quotient-estimate}, we will compare $h^t$ to an explicit function. 
  \begin{lemma}
    Define $\sigma_1 =\frac{\pi^2\csc^2 \theta_{\ast}}{(\theta_1-\theta_0)^2} +(\csc^2 \theta_{\ast} -1)4c^2$ and $w_1(\theta)= \frac{1}{\sqrt{\sigma_1}}\sin\sqrt{\sigma_1}(\theta-\theta_0)$. 
    For $t \in [0,1]$, the solution $h^t$ to \eqref{evalue} with $\mu(t) = c^2 + 3c^2 t$ and $v(t) = \lambda(t) \csc^2 \theta$ and boundary condition $h(\theta_0) = 0$, $h'(\theta_0) =1$, $h(\theta_1) =0$ satisfies
    \[
    h^t(\theta) \le w_1(\theta), \quad \textrm{ for }\theta \in (\theta_0, \tilde \theta), \ t \in [0,1],
    \]
    where $\tilde\theta=\pi/\sqrt{\sigma_1}+\theta_0$.
  \end{lemma}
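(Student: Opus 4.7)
My plan is a Sturm/Wronskian comparison applied to the two linear second order equations
$(h^t)'' + V_t(\theta)\,h^t = 0$, with $V_t(\theta) := \lambda(t)\csc^2\theta - \mu(t)$, and
$w_1'' + \sigma_1\,w_1 = 0$, both carrying the same initial data $y(\theta_0) = 0$, $y'(\theta_0) = 1$. The argument splits into two steps: first establish a uniform pointwise bound comparing the potentials $V_t$ and $\sigma_1$ on $[0,1]\times[\theta_0,\theta_1]$, then transfer this into the pointwise comparison between $h^t$ and $w_1$ on $(\theta_0,\tilde\theta)$ via the Wronskian.

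For the potential step, I would apply the upper bound in Lemma~\ref{first-eigenvalue-est} to problem \eqref{evalue}, obtaining $\lambda(t) \le \mu(t) + \pi^2/(\theta_1-\theta_0)^2$. Combined with $\mu(t) = c^2 + 3c^2 t \le 4c^2$ for $t\in[0,1]$ and $\csc^2\theta \le \csc^2\theta_*$ on $[\theta_0,\theta_1]$, this yields
\[
V_t(\theta) \le \mu(t)(\csc^2\theta - 1) + \frac{\pi^2\csc^2\theta}{(\theta_1-\theta_0)^2} \le 4c^2(\csc^2\theta_* - 1) + \frac{\pi^2\csc^2\theta_*}{(\theta_1-\theta_0)^2} = \sigma_1,
\]
uniformly in $t\in[0,1]$, so that $\sigma_1$ arises precisely as the extremal upper envelope of $V_t$. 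For the transfer step, I would examine the Wronskian $W(\theta) := h^t(\theta) w_1'(\theta) - (h^t)'(\theta) w_1(\theta)$. The matching initial data forces $W(\theta_0) = 0$, and using the two ODEs one computes $W'(\theta) = (V_t(\theta) - \sigma_1)\,h^t(\theta)\,w_1(\theta)$. Since $h^t$ and $w_1$ are both strictly positive on $(\theta_0,\tilde\theta)$, the pointwise sign of $V_t - \sigma_1$ controls the sign of $W$, and therefore of $(h^t/w_1)' = -W/w_1^2$. L'Hopital's rule gives $h^t/w_1 \to 1$ as $\theta\to\theta_0^+$, so the monotonicity of the ratio delivers the pointwise comparison of $h^t$ with $w_1$ on $(\theta_0,\tilde\theta)$.

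The main obstacle is the direction of the comparison. With $V_t \le \sigma_1$ in place, the Wronskian analysis forces $h^t/w_1$ to be nondecreasing starting from the limit value $1$ at $\theta_0$, which naturally produces $h^t \ge w_1$ rather than the asserted $h^t \le w_1$. To recover the inequality exactly as stated, one must either change the normalization built into $h^t$ (for instance so that $(h^t)'(\theta_0)$ matches a suitably rescaled comparison function) or replace $w_1$ by a companion solution whose initial slope dominates $1$; verifying that one of these reformulations is indeed what is needed in the subsequent proof of Proposition~\ref{prop:quotient-estimate}, and then executing the Sturm step against this adjusted comparison function, is the critical point of the argument.
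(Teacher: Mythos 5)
Your two steps are exactly the paper's proof: the potential bound $V_t(\theta)=\lambda(t)\csc^2\theta-\mu(t)\le\sigma_1$ via the upper bound of Lemma~\ref{first-eigenvalue-est}, followed by Sturm comparison of $h^t$ against $w_1$ with matched initial data (the paper simply cites its ``Sturm comparison theorem II'' from the appendix, whose proof is precisely the Wronskian computation you wrote out). So the approach is the same, and your execution of both steps is correct.

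The ``main obstacle'' you identify is not a defect in your argument but a sign typo in the lemma as stated. The paper's own proof of this lemma concludes $h^t(\theta)\ge w_1(\theta)$ on $(\theta_0,\tilde\theta)$ --- exactly what your Wronskian monotonicity gives --- even though the displayed statement says $\le$. The check you propose against Proposition~\ref{prop:quotient-estimate} confirms that $\ge$ is the direction actually needed there: the key step is
\[
\int_{\theta_0}^{\alpha}(\csc^2\theta-1)\,(h^t)^2\,d\theta\ \ge\ \int_{\theta_0}^{\alpha}(\csc^2\theta-1)\,w_1^2\,d\theta ,
\]
which requires $h^t\ge w_1$ on $(\theta_0,\alpha)\subset(\theta_0,\tilde\theta)$ since $\csc^2\theta-1\ge 0$. (The companion bound $h^t\le w_2$, with $w_2$ built from the \emph{lower} envelope $\sigma_2$ of the potential, is stated separately right after the lemma and is what controls $\int (h^t)^2$ from above.) So do not renormalize $h^t$ or replace $w_1$: your derivation is the intended one, and the inequality in the lemma statement should read $h^t(\theta)\ge w_1(\theta)$.
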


%

  \begin{proof}
    First, note that from \eqref{eq:lambda1-estimate}, we have
  \[
    v(t)-\mu(t) \le \sup_t \left(\lambda(t)\csc^2 \theta -\mu(t)\right) \le  \sup_t \left(\lambda(t) \csc^2 \theta_{\ast}  - \mu(t)\right) \le \sigma_1.
  \]
  Since $w_1(\theta)$ satisfies \[ w_1''+\sigma_1 w_1=0, \ \ w_1(\theta_0) =0, \ w_1'(\theta_0) =1,
  \]
  and $w_1>0$ on $(\theta_0, \tilde \theta)$, by Sturm comparison theorem II in the Appendix, we have $h^t(\theta) \geq w_1(\theta)$ for all $\theta \in (\theta_0, \tilde \theta)$ and $t \in [0,1]$. 
  \end{proof}

  Similarly, since 
  \begin{equation*}
    v(t) - \mu (t) \ge \inf_t(\lambda (t) \csc^2 \theta - \mu(t) \ge \sigma_2, 
  \end{equation*} 
  where $\sigma_2 = \sin^2\theta_{\ast}\frac{ \pi^2}{(\theta_1-\theta_0)^2} - \cos^2\theta_{\ast}4c^2$, and $h^t(\theta)>0$ on $(\theta_0, \theta_1)$, using Sturm comparison theorem II in the appendix again, we have,  for all $t \in [0,1]$, 
	\begin{equation}
	  \label{eq:vgreaterthanu}
	  w_2(\theta)\ge h^t(\theta) \textrm{ on } (\theta_0, \theta_1), 
	\end{equation} 
	where $w_2(t)$ is the solution of $$ w''+\sigma_2 w=0, \ \ w(\theta_0) =0, \ w'(\theta_0) =1. $$
	

  \begin{proof}[Proof of Proposition \ref{prop:quotient-estimate}]
    We choose an angle $\alpha$ so that 
    \[
    \theta_0 < \alpha < \min\left( \frac{\pi}{2}, \tilde \theta\right). 
    \]
   Then  
  \begin{align*}
    \int_{\theta_0}^{\theta_1} (\csc^2 \theta) h^2 d\theta&= \int_{\theta_0}^{\theta_1}  h^2 d\theta + \int_{\theta_0}^{\alpha} (\csc^2 \theta -1) h^2 d\theta + \int_{\alpha}^{\theta_1} (\csc^2 \theta -1 ) h^2 d\theta  \\
      & \ge \int_{\theta_0}^{\theta_1}  h^2 d\theta + \int_{\theta_0}^{\alpha} (\csc^2 \theta -1) w^2 d\theta \\
      & \ge \int_{\theta_0}^{\theta_1}  h^2 d\theta +(\csc^2 \alpha -1) \int_{\theta_0}^{\alpha}  w^2 d\theta \\
      & = \int_{\theta_0}^{\theta_1}  h^2 d\theta +(\csc^2 \alpha -1) b,
\end{align*}
where $b = \frac{1}{2 \sigma_1} \left(  (\alpha - \theta_0) - \frac{1}{2 \sqrt{\sigma_1}}\sin (2 \sqrt{\sigma_1} (\alpha - \theta_0))\right)$ is a positive constant which does not go to zero when $c$ tends to zero.

By \eqref{eq:vgreaterthanu},  $ \int_{\theta_0}^{\theta_1}  h^2 d\theta$ is bounded from above for all $t \in [0,1].$  
Therefore there is a $\delta>0$ for which 
   \[
     \max_t\frac{\int h^2 d\theta}{  \int (\csc^2  \theta)h^2d\theta} \leq \max_t\frac{\int h^2 d\theta}{\int h^2 d\theta + (\csc^2 \alpha -1) b} \leq  1 -\delta.\qedhere
   \]
\end{proof}
	

We are now ready to prove our main theorem.
\begin{proof}[Proof of Theorem ~\ref{main theorem}]
Combining the estimates  \eqref{eq:gap-estimate-improved} and \eqref{concrete}, with Lemma~\ref{diam-c}, we get that for all  $c$ sufficiently small, and $\theta_{\ast} > \tfrac{\pi}{6}$, the fundamental gap of  the domains $\Omega_{c, \theta_0, \theta_1}$  is less than $ 3 \pi^2/D^2$. 
\end{proof}

In Shih's example, $\theta_0 = \tfrac{\pi}{4}, \theta_1 \in (\tfrac{\pi}{2}, \tfrac{3\pi}{4})$, $c^2 < \tfrac{\pi}{5} \cot (\tfrac{19}{40}\pi) \left( 1+\tfrac{\pi}{40} \cot (\tfrac{19}{40}\pi) \right)^{-1}$. Hence $\theta_{\ast} = \tfrac{\pi}{4}$ and $c, \theta_{\ast}$ satisfy the condition \eqref{eq:Bound-c}.  Then, we can use \eqref{eq:RoughFudamentalGap} to get the gap estimate  $\lambda_2 - \lambda_1 \ge \tfrac{3}{2} c^2$. By \eqref{eq:diameter-estimate}, $c^2 > \tfrac{\pi^2}{D^2}$, therefore the gap is strictly greater than $ \tfrac{3}{2} \tfrac{\pi^2}{D^2}$, i.e. $\lambda_2 - \lambda_1> \tfrac{3\pi^2}{2D^2}$. 

\section{Appendix}

For convenience, we state here two versions of  Sturm comparison for Jacobi equations.

 Sturm comparison theorem I:  For $i=1,2$, let $f_i>0$ satisfy 
 $${f_i}'' + b_i f_i=0 \text{ on }(0,x_i),$$
 and $f_i(0)=0$, $f_i(x_i)=0$.     Suppose that $b_1\ge b_2$.   Then $x_1\le x_2$.
 If $x_1=x_2$, then $b_1 \equiv b_2$. 
 
  Sturm comparison theorem II: (see e.g. \cite[Page 238-239]{doCarmo})   For $i=1,2$, let $f_i$ satisfy 
  $${f_i}'' + b_i f_i=0 \text{ on }(0,l),$$
  and $f_i(0)=0$,  $f'_i(0)=1$.     Suppose that $b_1\ge b_2$ and $f_1 >0$ on $(0,l)$.   Then $f_1\le f_2$ on $(0,l)$.
  If $f_1=f_2$ at $t_1 \in (0,l)$, then $b_1 \equiv b_2$ on $(0,t_1)$.

We present below an alternative proof of Lemma \ref{first-eigenvalue-est}, using 
 Sturm comparison theorem I.
 
\begin{proof}[Alternative proof of Lemma  \ref{first-eigenvalue-est}]   Let $h>0$ satisfy \eqref{eq:h-equation}
 on $(\theta_0,\theta_1)$.   

We  consider two comparison functions, $h_1>0$ and $h_2>0$, satisfying
  \begin{equation*}
(h_i)_{\theta\theta} + b_i h_i =0  \text{ on } (\theta_0, a_i),
  \end{equation*}
  where $b_1= \lambda \csc^2 \theta_\ast - \delta$, $b_2 = \lambda - \delta$, with the boundary conditions 
  $h_i(\theta_0) = 0 \text{ and }   h_i(a_i) = 0.$

Note that 
$$b_2 \le  (\lambda\csc^2\theta-\delta)\le b_1,$$
so the Sturm comparison theorem I implies that
 \begin{equation} \label{eq: theta estimate} a_1<\theta_1<a_2.
 \end{equation}  

 However $h_1=\sin(\sqrt{\lambda \csc^2 \theta_\ast-\delta}(\theta-\theta_0))$ with $a_1= \theta_0+ \pi (\lambda \csc^2 \theta_\ast-\delta)^{-1/2},$ and $h_2 = \sin(\sqrt{\lambda - \delta} (\theta- \theta_0)$ with $a_2 = \theta_0 + \pi(\lambda -\delta)^{-1/2}$. So \eqref{eq: theta estimate} implies
  \[
    \sin^2 \theta_{\ast} \left(  \delta  + \frac{\pi^2}{\left(\theta_1-\theta_0\right)^2}\right)<\lambda<\delta  + \frac{\pi^2}{\left(\theta_1-\theta_0\right)^2}. \qedhere
  \]
\end{proof}

\bibliographystyle{plain}
\bibliography{references}

 \end{document}